\title{Equivalences induced by infinitely generated silting modules}
\author{Simion Breaz}
\address{Babe\c s--Bolyai University, Faculty of Mathematics and Computer Science \\  1, Mihail Kog\u alniceanu, 400084 Cluj--Napoca, Romania}
\email{bodo@math.ubbcluj.ro}
\author{George Ciprian Modoi}
\address{Babe\c s--Bolyai University, Faculty of Mathematics and Computer Science \\  1, Mihail Kog\u alniceanu, 400084 Cluj--Napoca, Romania}
\email{cmodoi@math.ubbcluj.ro}
\thanks{}
\subjclass[2010]{16E30, 18E30, 16D90}
\keywords{silting module, silting complex, endomorphism ring, endomorphism dg-algebra, dg-module, derived functors}
\renewcommand{\iff}{if and only if }
\newcommand{\la}{\longrightarrow}
\newcommand{\Z}{\mathbb{Z}}
\DeclareMathOperator{\Hom}{Hom} 
\DeclareMathOperator{\RHom}{RHom}
\newcommand{\dotimes}{\otimes^{\mathrm{L}}}
\DeclareMathOperator{\End}{End}
\DeclareMathOperator{\DgEnd}{DgEnd}
\DeclareMathOperator{\Ext}{Ext} 
\DeclareMathOperator{\Tor}{Tor}
\DeclareMathOperator{\Ker}{Ker}
\DeclareMathOperator{\Img}{Im}
\DeclareMathOperator{\Coker}{Coker}
\DeclareMathOperator{\Zy}{\mathrm{Z}}
\DeclareMathOperator{\Ho}{\mathrm{H}}
\DeclareMathOperator{\Bd}{\mathrm{B}}
\DeclareMathOperator{\KT}{\mathbf{Z}}
\newcommand{\A}{\mathcal{A}}
\newcommand{\B}{\mathcal{B}}
\newcommand{\K}{\mathcal{K}}
\newcommand{\C}{\mathcal{C}}
\newcommand{\D}{\mathcal{D}}
\newcommand{\F}{\mathcal{F}}
\newcommand{\CH}{\mathcal{H}}
\newcommand{\T}{\mathcal{T}}
\newcommand{\CT}{\mathcal{T}}
\newcommand{\U}{\mathcal{U}}
\newcommand{\CU}{\mathcal{U}}
\newcommand{\CV}{\mathcal{V}}
\newcommand{\BP}{\mathbb{P}}
\newcommand{\BQ}{\mathbb{Q}}
\newcommand{\BE}{\mathbb{E}}
\newcommand{\bD}{\mathbf{D}}
\newcommand{\Modr}{\mathrm{Mod}\text{-}}
\newcommand{\Def}{\mathrm{Def}}
\newcommand{\Gen}{\mathrm{Gen}}
\newcommand{\opp}{^\textit{op}}
\newcommand{\Mod}[1]{\hbox{\rm Mod}({#1})}
\newcommand{\DgMod}[1]{\hbox{\rm DgMod}({#1})}
\newcommand{\add}[1]{\mathrm{add}({#1})}
\newcommand{\Add}[1]{\mathrm{Add}({#1})}
\newcommand{\Der}[1]{\mathbf{D}({#1})}
\newcommand{\Htp}[1]{\mathbf{K}({#1})}
\theoremstyle{plain}
\newtheorem{thm}{Theorem}[subsection]
\newtheorem{lem}[thm]{Lemma}
\newtheorem{prop}[thm]{Proposition}
\newtheorem{cor}[thm]{Corollary}
\theoremstyle{definition}
\theoremstyle{remark}
\newtheorem{rem}[thm]{Remark}
\newtheorem{expl}[thm]{Example}
\begin{document}

\begin{abstract}
We study equivalences induced by a complex $\BP$, consisting of projectives and concentrated in degrees $-1$ and $0$, which 
is silting in the derived category $\bD(R)$ of a ring $R$. 
\end{abstract}

\maketitle

\section{Introduction}

A \textsl{torsion theory} in an abelian category $\A$ 
(e.g. $\A=\Mod R$ is the category of right $R$-modules) is a pair $\tau=(\T,\F)$, such that the classes $\T$ and $\F$ are  $\Hom_\A(\T,\F)=0$, and for every $X\in\A$ there is a short exact sequence 
$0\to T\to X\to F\to 0$, such that $T\in\T$ and $F\in\F$. Then $\T$ and $\F$ are called the \textsl{torsion class}, respectively 
the \textsl{torsion free class} of $\tau$. 

In the context of a triangulated category 
$\D$ endowed with the shift functor $-[1]:\D\to\D$ (e.g. $\D=\Der R$ the derived category of the category of $R$-modules), a \textsl{t-structure} is a pair $(\A,\B)$ of full subcategories if $\D$ such that 
\begin{enumerate}
 \item $\Hom_\D(\A,\B[-1])=0$.
 \item $\A\subseteq\A[-1]$ (or equivalently $\B[-1]\subseteq\B$).
 \item For every $X\in\D$ there is a triangle $X'\to X\to X''\stackrel{+}\to$, where $X'\in\A$ and $X''\in\B[-1]$.
\end{enumerate}
%
The \textsl{heart} of a t-structure $(\A,\B)$ is defined to be the subcategory 
$\CH=\A\cap\B$. We recall that the heart $\CH$ is an abelian category. Note that the definition of a t-structure implies 
immediately that the inclusion functors $\A\to\D$ and $\B\to\D$ have a right, respectively a left adjoint. 
For more informations about torsion pairs and t-structures one can consult  \cite[Chapter I, Section 2]{HRS}.

One of the central results in Tilting Theory is the Tilting Theorem, \cite[Theorem 3.5.1]{Colby_Fuller:2004}, which states that if $(\T,\F)$ the torsion theory generated by a
finitely presented (i.e. classical) tilting right $R$-module $T$ then there exists a torsion theory $(\mathcal{X},\mathcal{Y})$ in the category of right $E$-modules 
($E$ is the endomorphism ring of $T$) and a pair of equivalences 
$$\Hom_R(T,-):\T\rightleftarrows \mathcal{Y}:-\otimes_{E}T\text{ and }\Ext^1_R(T,-):\T\rightleftarrows \mathcal{X}:\Tor^{E}_1(-,T).$$ Such a pair of equivalences is called a 
\textsl{counter-equivalence}. It was proved in \cite{CET} that the existence of a counter equivalence is strongly related to the existence of a classical tilting module which generates $\T$. 
In the case of infinitely generated tilting modules, versions of Tilting Theorem was formulated at the level of for derived category in \cite{Baz1} and \cite{BMT}.  The main idea is that every tilting module $S$ is equivalent to a good tilting module $T\in\Mod{R}$ which induces an equivalence between the derived category $\Der{R}$ and a subcategory 
of the derived category $\Der{\End(T)}$. This equivalence also induces a counter equivalence at the level of module categories, i.e. the functors $\Hom_R(T,-)_{|\T}$ and $\Ext^1_R(T,-)_{|\F}$ are fully faithful, 
 and the quasi-inverses of the functors are induced by $-\otimes_ET$ and $\Tor_1^E(T,-)$, \cite{Baz1}, \cite{fac1}, \cite{fac2}.

In order to be more precise, let us start with some settings and well--known definitions. In this paper all rings are unital, all categories and functors are additive, and all classes of objects are closed under isomorphisms. If $R$ is a ring then $\Mod{R}$
denotes the category of right $R$-modules, and $\Der R$ is the associated derived category of $\Mod R$.  If $\BP$ is a complex, then $\Ho^{n}(\BP)$ 
denotes the $n$-th cohomology group associated to $\BP$.
If $\C$ is a category and $X$ is an object in $\C$ then $\Add X$ (resp. $\add X$) denotes the class of all objects isomorphic to direct summands of 
(finite) direct sums of copies of $X$. If $F:\C\to \D$ is a functor then $\Ker F$ denotes the class of all objects $X$ from $C$ such that $F(X)=0$. 
If $T$ and $M$ are $R$-modules then $M$ is \textsl{$T$-generated} if there exists an epimorphism $T^{(I)}\to M$, and  $\Gen(T)$ denotes the class of all $T$-generated modules.

Now we return to the case of an $R$-module $T$, with $E=\End_R(T)$. If $T$ is tilting, then the torsion theory $(\T,\F)$ associated with $T$ has $\T=\Gen(T)$. 
By \cite[Chapter 1, Proposition 2.1]{HRS} it induces a t-structure in the derived category $\Der{R}$ of $R$, whose heart $\mathcal{H}$ is equivalent to the category of right $E$-modules. 
This equivalence is realized by the derived Hom functor $\RHom_R(T,-)$, 
and its quasi-inverse is computed by using the derived tensor product.
Conversely, it was proved that the heart of the $t$-structure associated to a torsion theory is equivalent to a module category if the torsion class is generated by a module $T$ 
which has a projective presentation
$P^{-1}\to P^{0}\to T\to 0$  such that the associated complex $$\BP=\dots\to 0\to P^{-1}\to P^{0}\to 0\to\cdots$$ has some special properties (it is compact and silting) 
in the derived category (\cite{CMT}, \cite{HKM}, \cite{MT}, \cite{PS1}). 
In particular, the support $\tau$-tilting modules introduced in \cite{AIR} admit such a projective presentation.

Silting modules are generalizations of tilting ones and they were introduced in \cite{AMV} as infinitely generated versions of support $\tau$-tilting modules. Further  they are characterized as the modules of the form $\Ho^0(\BP)$, where $\BP$ is a two term silting complex. We refer to \cite{IJY}, \cite{Ma-Sto:2015}, and \cite{Ps-Vi} 
for various correspondences realized by such complexes. The main aim of the present paper is to study some equivalences induced by silting modules, providing a Silting Theorem, that is a correspondent for the Tilting Theorem. This can be useful since for perfect or hereditary rings many torsion theories
are generated by silting modules, \cite{BrZ2}, but there are many of them which are not generated by tilting modules, \cite{An-ab}. It was proved in \cite[Theorem 3.8]{Jasso} that the Hom-covariant functor and the tensor functor induced by a support
$\tau$-tilting module define an equivalence as in the above described counter-equivalences. If $T$ is a silting module then it still induces a torsion pair $(\T,\F)$, where 
$\T=\Gen(T)$. If $U$ is the annihilator ideal for $\T$ 
then $T$ is an $R/U$-tilting module (possible infinitely generated), 
hence the Tilting Theorem proved in \cite{Baz1} can be applied to deduce that $\Hom_R(T,-):\Modr R\to \Modr E$ induces an equivalence between $\T$
and its image with $-\otimes_E T$ as a quasi-inverse. But a direct application of the Tilting Theorem does not give us information for the whole class $\F$.
For a support $\tau$-tilting module $T$, the case when the covariant functors $\Ext^1_R(T,-)$ and $\Tor_1^E(-,T)$ induce an equivalence is characterized in \cite{Tre}. 
If $R$ is hereditary, by \cite[Proposition 5.2]{An-M-V} it follows that the annihilator of a silting module is idempotent, and it is easy to see using \cite[Theorem 2.1]{Tre} 
that in the case of support $\tau$-tilting modules the covariant functor $\Ext^1_R(T,-)_{|\F}$ induces an equivalence with the quasi-inverse the functor 
$\Tor_1^E(-,T)$ iff the module $T$ is tilting (at the level of derived categories the same conclusion can be obtained by using \cite[Theorem A]{Ps-Vi}). 

The main aim of this paper is to study the equivalences induced by a silting module associated to a silting complex $\BP$, and to extend the results proved in \cite{Buan} and \cite{HKM} for the support $\tau$-tilting case. In contrast with the tilting case, when we consider a silting object $\BP\in\Der R$, 
the module $T=\Ho^0(\BP)$ does not carry all information we need since a silting complex is not quasi-isomorphic (i.e. it is not isomorphic in the derived category) to the corresponding silting module. Therefore we have to deal not only with the module $T$ but with the whole complex $\BP$.
In Section \ref{silting-basics} are gathered necessary results about dg-modules over dg-algebras. In Section \ref{silting-complex} 
we recall the definitions for silting modules and silting complexes, and some basic properties connected to  the torsion theory $(\T,\F)$ associated with such a module (complex). 
Next we construct a
\textsl{good silting module} $T$ which generates $\T$. Therefore, every silting complex will be equivalent (in the sense that they induce the same torsion theory) to a good one. If 
$\BP=P^{-1}\overset{\sigma}\to {P^0}$ is a silting complex, 
for which $\Ho^0(\BP)=T$, then we consider the right derived 
Hom functor and its left adjoint (namely the left derived tensor product) between the category $\Der R$ and the derived category of the dg-endomorphism algebra $\DgEnd_R(\BP)$ of $\BP$. In Section \ref{silting-thm} we state and prove the targeted Silting Theorem, first at the level of the derived categories, that is for a good silting complex of $R$-modules $\BP$ we construct an equivalence between $\Der R$ and a subcategory of $\Der{B}$ where $B$ is a smart truncation of  $\DgEnd_R(\BP)$. 
Then we specialize the 
above equivalence, in order to obtain the so called, a silting counter equivalence between the torsion theory induced by a silting module and some subcateogries of the torsion-free class, repectively the torsion class of the torsion pair $(\CU,\CV)$ in $\Mod\BE$ which is defined by $\CU=\Ker(-\otimes_\BE T)$.

\section{Preliminaries}\label{silting-basics}



We recall here some generalities about dg-algebras and the total derived functors between their derived categories. 
We will follow 
\cite{KDG}, \cite{KDU}, and \cite{keller-functori} in these considerations.

Let $k$ be a commutative ring. Recall that a \textsl{dg-algebra} is a $\Z$-graded $k$-algebra $B=\bigoplus_{i\in\Z}B^i$ endowed with a \textsl{differential}  
$d:B\to B$ such that $d^2=0$ which is homogeneous of degree $1$, that is   $d(B^i)\subseteq B^{i+1}$ for all $i\in\Z$, and satisfies the graded Leibniz rule:
\[d(ab)=d(a)b+(-1)^iad(b)\hbox{, for all }a\in B^i\hbox{ and }b\in B.\] 
A (by default, right) \textsl{dg-module} over $B$ is a $\Z$-graded module \[M=\bigoplus_{i\in\Z}M^i\] endowed with a $k$-linear square-zero differential $d:M\to M$, which is homogeneous 
of degree $1$ and satisfies the graded Leibnitz rule: 
\[d(xb)=d(x)b+(-1)^ixd(b)\hbox{, for all }x\in M^i\hbox{ and }b\in B.\] Left dg-$B$-modules are defined similarly. A \textsl{morphism of dg-$B$-modules} 
is a $B$-linear map $f:M\to N$ compatible with gradings and differentials. In this way we obtain the category $\Mod{B}$ of all dg-$B$-modules. 

If $B$ is a dg-algebra, then the \textsl{dual dg-algebra} $B\opp$ is defined as follows:
as graded $k$-modules $B\opp=B$, the multiplication is given by $ab=(-1)^{ij}ba$ for all $a\in B^i$ and all $b\in B^j$ and the differential $d:B\opp\to B\opp$ is the same 
as in the case of $B$. It is clear that a left dg-$B$-module $M$ is a 
right dg-$B\opp$-module with the ``opposite" multiplication $xa=(-1)^{ij}ax$, for all $a\in B^i$ and all $x\in M^j$, henceforth we denote by 
$\Mod{B\opp}$ the category of left dg-$B$-modules.

For a dg-module $M\in\Mod{B}$ and for all $n\in\Z$ we define the $n$-th \textsl{cocycles}, \textsl{boundaries}, respective \textsl{cohomology} $B^0$-modules by 
\begin{align*} &\Zy^n(M)=\Ker(M^n\stackrel{d}\to M^{n+1}), \  
\Bd^n(M)=\Img(M^{n-1}\stackrel{d}\to M^{n}), 
\textrm{ and }\\ & \Ho^n(M)=\Zy^n(M)/\Bd^n(M).\end{align*} 
Note that these formulas induce functors into the category of $B^0$-modules. 

A morphism of dg-modules is called \textsl{quasi-isomorphism} if it induces 
isomorphisms in all cohomologies. A dg-module $M\in\Mod{B}$ is \textsl{acyclic} if $\Ho^n(M)=0$ for all $n\in\Z$. 
A morphism of dg-$B$-modules $f:M\to N$ is called \textsl{null--homotopic} provided that there is a graded homomorphism $s:M\to N$ of degree $-1$ such that 
$f=sd+ds$. The \textsl{homotopy category} $\Htp{B}$ has the same objects as $\Mod{B}$ and the morphisms are equivalence classes of morphism of dg-modules, up to homotopy. 
It is well--known that the homotopy category is triangulated. Moreover a null--homotopic morphism is acyclic, therefore the functors $\Ho^n$ factor through $\Htp{B}$ 
for all $n\in\Z$.

The \textsl{derived category} $\Der{B}$ is obtained from $\Htp{B}$ by formally inverting all quasi-isomorphisms. An object $U\in\Der{B}$ is called \textsl{cofibrant} 
if for every acyclic dg-$B$-module $N$ we have  
$\Hom_{\Htp{B}}(U,N)=0$. This is equivalent to $$\Hom_{\Der{B}}(U,M)=\Hom_{\Htp{B}}(U,M)$$ for all dg-$B$-modules $M$. Dually we define fibrant objects. 

For two dg-modules $M,N\in\Mod{B}$ we consider the so called \textsl{dg-Hom} complex 
\[\Hom^\bullet_B(M,N)=\bigoplus_{n\in\Z}\Hom_B^n(M,N) \]
with $\Hom_B^n(M,N)=\prod_{i\in\Z}\Hom_B(M^i,N^{n+i})$, whose differentials are given by 
$$d(f)(u)=d_Nf(u)-(-1^n)fd_M(u) \textrm{ for all } f\in\Hom_B^n(M,N).$$ In this way we obtain a new category, $\DgMod{B}$ whose objects are the same as the objects of 
$\Mod{B}$, that is dg-modules, but whose morphisms are dg-Hom complexes. Note that the morphisms in $\Mod{B}$ and $\Htp{B}$ between the dg-modules $M$ and $N$, are exactly 
$\Zy^0\Hom^\bullet_B(M,N)$, respectively $\Ho^0\Hom^\bullet_B(M,N)$.

Let now $A$ and $B$ be two dg-algebras and let $U$ be a dg-$B$-$A$-bimodule (that is $U$ is a dg-$B\opp\otimes_kA$-module).  
In this situation, for every $X\in\Mod{A}$ the dg-Hom complex  $\Hom^\bullet_A(U,X)$ becomes a dg-$B$-module, so we get a functor 
(the definition on morphisms is obvious)
\[\Hom^\bullet_A(U,-):\Mod{A}\to\Mod{B}.\]
It induces the \textsl{right derived Hom functor}  
\[\RHom_A(U,-):\Der{A}\to\Der{B},\]
where $\RHom_A(U,X)=\Hom^\bullet_A(U',X)\cong\Hom_A^\bullet(U,X')$ where $U'$ is a \textsl{cofibrant replacement} of $U$ (that is, a cofibrant dg-$A$-module $U'$ 
together with a  quasi-isomorphism $U'\to U$) and $X'$ is a fibrant replacement of $X$ (which is defined by duality), see \cite[Theorem 12.1.1]{Yek}. It was proved in \cite[Theorem 3.1]{KDG} that
(co)fibrant replacements always exist in $\Htp{A}$.  

 Let $M\in\Mod{B}$. There exists a natural grading on the usual tensor product $M\otimes_BU$, which can be described as:
\[M\otimes^\bullet_BU=\bigoplus_{n\in\Z}M\otimes^n_BU,\]
where $M\otimes^n_BU$ is the quotient of $\bigoplus_{i\in\Z}M^i\otimes_{B^0}U^{n-i}$ 
by the submodule generated by $m\otimes bu-mb\otimes u$ where $m\in M^i$, $u\in U^{j}$ and $b\in B^{n-i-j}$, for all $i,j\in\Z$. Together with the differential
\[d(m\otimes u)=d(m)\otimes u+(-1)^im\otimes d(u)\hbox{, for all }m\in M^i, u\in U,\] we obtain a 
a functor   
$-\otimes^\bullet_BU:\Mod{B}\to\Mod{A},$ and further a triangle functor 
$-\otimes^\bullet_BU:\Htp{B}\to\Htp{A}.$ The \textsl{left derived tensor product} \[-\dotimes_BU:\Der{B}\to\Der{A}\] is defined 
by $Y\dotimes_BU=Y'\otimes^\bullet_BU\cong Y\otimes^\bullet_BU',$ where $Y'$ and $U'$ are 
cofibrant replacements for $Y$ and $U$ in $\Htp{B}$ and $\Htp{B\opp}$ respectively.

A dg-algebra $B=\bigoplus_{i\in\Z}B^i$ is called \textsl{(homologically) non-positive} if $B^i=0$ (respectively $\Ho^i(B)=0$) for $i>0$.

\section{Two term silting complexes}\label{silting-complex} 

\subsection{Silting modules and silting complexes} Let $R$ be a unital ring. If $P^{-1}\overset{\sigma}\to {P^0}$ is a morphism between projective right $R$-modules then \textsl{the defect of $\sigma$} 
is defined as the functor $$\Def_\sigma(-)=\Coker(\Hom_R(\sigma,-)):\Mod{R}\to Ab.$$ We will denote by $\D_\sigma$ the kernel (on objects) of $\Def_\sigma$, 
i.e. the class of all modules $L\in \Mod{R}$ such that every morphism $\alpha:P^{-1}\to L$ can be extended to a morphism $P^0\to L$. 

We recall from \cite{AMV} that a right $R$-module $T$ is \textsl{silting} with respect to a projective resolution $P^{-1}\overset{\sigma}\to {P^0}\to T\to 0$ if 
\begin{itemize}
\item[(s)] $\Gen(T)=\D_\sigma$. 
\end{itemize}
It is easy to see that $\Gen(T)$ is closed under direct sums and epimorphic images. Using \cite[Proposition 4]{BrZ}, it follows that $\Ker(\Def_\sigma)$ is closed under extensions. 
Therefore, if $T$ is silting with respect to $\sigma$ then the class $\T=\Gen(T)=\D_\sigma$ is a torsion class. We will denote by $\tau=(\T,\F)$ the induced torsion theory in $\Mod{R}$. 

In this case we associate to $\sigma$ the complex 
$$\BP= \dots\to 0\to P^{-1}\overset{\sigma}\to {P^0}\to 0\to \cdots$$ 
of projective modules, and we note that $T$ is silting with respect to $\sigma$ if and only if $\BP$ is a
\textsl{silting complex} of projective modules (cf. \cite[Theorem 4.9]{AMV}), i.e. 
\begin{itemize}
\item[(S1)] $\BP^{(I)}\in \BP^{\perp_{>0}}$ for all sets $I$, and 
\item[(S2)] the homotopy category $\mathbf{K}^b (\mathrm{Proj(R)})$ is the smallest triangulated subcategory of $\bD(R)$ containing $\Add\BP$,
\end{itemize}
where $$\BP^{\perp_{>0}}=\{Y\in \Der{R}\mid \Hom_{\Der{R}}(\BP,Y[n])=0 \text{ for all positive integers }n\}.$$
If $\BP$ satisfies only the condition (S1) then it is called \textsl{presilting}.

\begin{rem} In literature a complex as before is also called 2-term silting complex, in order to emphasize that it contains only two non-zero entries. 
There are also defined $n$-term silting complexes, which are complexes with $n$ non-zero entries satisfying (S1) and (S2). We refer to \cite{An18} for a recent survey on this subject. However \textit{in what follows 
we entirely stick to the case of a 2-term silting complex, hence we drop the expression ``2-term'' from our considerations.}
\end{rem}

The following lemma is straightforward. It records connections between the functors induced by $T$ and $\BP$.

\begin{lem}\label{basic-hom-def}
Let $\BP\in\Der R$ be a complex induced by a morphism $\sigma:P^{-1}\to P^0$ between projective modules, and denote $T=\Ho^0(\BP)$. Then for every $M\in\Modr R$ there are canonical isomorphisms:
\begin{enumerate}
 \item   $\Hom_{\Der R}(\BP, M)\cong \Hom_R(T,M)$; 
\item  $\Hom_{\Der R}(\BP, M[1])\cong \Def_\sigma(M)$;
\item  $T\cong \Hom_{\Der{R}}(R,\BP)$.
 \end{enumerate}
\end{lem}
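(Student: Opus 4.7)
The plan is to represent each of the three Hom-sets in $\Der R$ by homotopy classes of chain maps. This is legitimate because both $\BP$ (a bounded complex of projective modules) and $R$ (a stalk complex whose term is the projective module $R$) are K-projective; consequently, for any complex $Y$, one has $\Hom_{\Der R}(\BP, Y) = \Hom_{\Htp R}(\BP, Y)$ and $\Hom_{\Der R}(R, Y) = \Hom_{\Htp R}(R, Y)$. Once this reduction is made, each isomorphism is essentially a bookkeeping exercise in degrees.

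For part (1), view $M$ as a stalk complex concentrated in degree $0$. A chain map $\BP \to M$ is the data of a morphism $f : P^0 \to M$ with $f \circ \sigma = 0$ (the condition in degree $-1$ is automatic because $M^{-1}=0$). A null-homotopy would require a morphism $P^0 \to M^{-1} = 0$, so no nonzero chain map is homotopic to zero. Hence $\Hom_{\Htp R}(\BP, M)$ is identified with $\{f \in \Hom_R(P^0, M) \mid f \sigma = 0\}$, and the universal property of the cokernel $T = \Coker(\sigma) = \Ho^0(\BP)$ yields the desired isomorphism with $\Hom_R(T, M)$.

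For part (2), $M[1]$ has $M$ in degree $-1$, so a chain map $\BP \to M[1]$ is exactly a morphism $g : P^{-1} \to M$ (no further compatibility is required, since the target vanishes in degree $0$). A homotopy from $g$ to $g'$ amounts to a morphism $h : P^0 \to M$ with $g - g' = h \sigma$. Therefore $\Hom_{\Htp R}(\BP, M[1]) \cong \Hom_R(P^{-1}, M)/\Img(\Hom_R(\sigma, M)) = \Coker(\Hom_R(\sigma, M)) = \Def_\sigma(M)$. For part (3), a chain map $R \to \BP$ is a morphism $f : R \to P^0$, and two such maps are homotopic iff they differ by $\sigma \circ h$ for some $h : R \to P^{-1}$; this gives $\Hom_{\Htp R}(R, \BP) \cong P^0 / \Img(\sigma) = T$, and a standard check confirms that the resulting isomorphism is the canonical one sending the class of $f$ to $f(1) + \Img(\sigma)$.

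The only genuinely non-formal ingredient is the identification $\Hom_{\Der R} = \Hom_{\Htp R}$ used throughout; everything else is a direct degree-by-degree computation of chain maps modulo homotopy. I do not anticipate a real obstacle here — the lemma is stated as straightforward, and the main thing to be careful about is the sign/shift convention defining $M[1]$ so that the cokernel in (2) really yields $\Def_\sigma$ rather than an $\Ext^1$-like quotient.
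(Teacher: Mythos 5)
Your proof is correct and uses the standard technique one would expect: since $\BP$ and $R$ are bounded complexes of projectives, $\Hom_{\Der R}$ is computed as $\Hom_{\Htp R}$, and each isomorphism falls out of a degree-by-degree inspection of chain maps modulo homotopy. The paper itself provides no proof, simply declaring the lemma straightforward, so there is nothing to compare against; your argument is exactly the kind of verification the authors had in mind.
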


From \cite[Theorem 4.6]{AMV} we extract the following useful result:

\begin{lem}\label{addP}
 If $\BP$ is a silting complex then 
 $$\Add{\BP}=\{X\in \BP^{\perp_{>0}}\mid \Hom_{\Der{R}}(X,Y[1])=0 \text{ for all } Y\in\BP^{\perp_{> 0}}\}.$$ 
\end{lem}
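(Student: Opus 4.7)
The plan is to prove both inclusions, with only the direction $\supseteq$ requiring genuine work. For $\subseteq$, any $X\in\Add\BP$ is a summand of some coproduct $\BP^{(I)}$. Condition (S1) says $\BP^{(I)}\in\BP^{\perp_{>0}}$, and since this class is closed under summands we obtain $X\in\BP^{\perp_{>0}}$. If moreover $Y\in\BP^{\perp_{>0}}$, then $\Hom_{\Der R}(\BP,Y[1])=0$, whence $\Hom_{\Der R}(\BP^{(I)},Y[1])=\prod_I\Hom_{\Der R}(\BP,Y[1])=0$ and passage to a summand yields $\Hom_{\Der R}(X,Y[1])=0$.

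For the converse, I would use a standard Bongartz/Wakamatsu-style completion in $\Der R$. Given $X\in\BP^{\perp_{>0}}$ with $\Hom_{\Der R}(X,Y[1])=0$ for all $Y\in\BP^{\perp_{>0}}$, set $I=\Hom_{\Der R}(\BP,X)$ and let $\phi:\BP^{(I)}\to X$ be the canonical evaluation map, so that by construction $\Hom_{\Der R}(\BP,\phi)$ is surjective. Complete $\phi$ to a distinguished triangle
$$N\to \BP^{(I)}\overset{\phi}{\to} X\overset{\partial}{\to} N[1].$$
The strategy is then to show that $N\in\BP^{\perp_{>0}}$, whereupon the hypothesis on $X$ forces $\partial=0$, the triangle splits, and $X$ appears as a direct summand of $\BP^{(I)}$.

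The key computation is the vanishing $\Hom_{\Der R}(\BP,N[k])=0$ for all $k\geq 1$. Apply $\Hom_{\Der R}(\BP,-)$ to the triangle and inspect the resulting long exact sequence. For $k=1$, surjectivity of $\Hom_{\Der R}(\BP,\phi)$ together with $\Hom_{\Der R}(\BP,\BP^{(I)}[1])=0$ (from (S1)) gives $\Hom_{\Der R}(\BP,N[1])=0$. For $k\geq 2$, both flanking terms $\Hom_{\Der R}(\BP,X[k-1])=0$ (since $X\in\BP^{\perp_{>0}}$) and $\Hom_{\Der R}(\BP,\BP^{(I)}[k])=0$ (from (S1)) vanish, so $\Hom_{\Der R}(\BP,N[k])=0$ as well. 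With $N\in\BP^{\perp_{>0}}$ established, the hypothesis yields $\Hom_{\Der R}(X,N[1])=0$, which forces $\partial=0$; the triangle splits as $\BP^{(I)}\cong X\oplus N$, and $X\in\Add\BP$. The main obstacle is precisely this degree-by-degree verification that $N$ lies in $\BP^{\perp_{>0}}$: it is only cohomological bookkeeping but has to be done carefully. Notably, the argument uses only (S1), so the characterisation would already hold for presilting $\BP$.
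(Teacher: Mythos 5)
Your proof is correct. The paper itself gives no proof of this lemma---it simply cites the result from \cite[Theorem 4.6]{AMV}---so there is no in-text argument to compare against, but the route you have taken (a Bongartz/Wakamatsu-style completion: form the canonical $\Add\BP$-precover $\BP^{(I)}\to X$ with $I=\Hom_{\Der R}(\BP,X)$, complete to a triangle, show the third vertex $N$ lies in $\BP^{\perp_{>0}}$ by reading off the long exact sequence, and conclude that the connecting map vanishes so the triangle splits) is exactly the standard argument behind the cited result, and the degree-by-degree bookkeeping you indicate does go through as you say. Your closing observation that the argument uses only the vanishing encoded in (S1) is also correct: both inclusions only invoke $\BP^{(I)}\in\BP^{\perp_{>0}}$ and closure of the relevant classes under coproducts and summands, so the characterisation is valid for any presilting two-term complex, not just a silting one. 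One minor point worth making explicit in a written-up version: the precover property $\Hom_{\Der R}(\BP,\phi)$ surjective holds by construction because each $f\in I$ is recovered as $\phi\circ\iota_f$ with $\iota_f$ the $f$-th coprojection into $\BP^{(I)}$, and this is genuinely needed for the $k=1$ step; for $k\ge 2$ you correctly only need $X\in\BP^{\perp_{>0}}$ and (S1).
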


The following result, which is a generalization of \cite[Corollary 3.3]{Buan}, can be extracted from \cite{Wei-semi}.
We include a proof for reader's convenience.

\begin{prop}\label{triangle}
Let $\BP\in\Der R$ be a complex induced by a morphism $\sigma:P^{-1}\to P^0$ between projective modules. The following are equivalent:
\begin{enumerate}[{\rm (1)}]
 \item $\Ho^0(\BP)$ is a silting module with respect to $\BP$;
 \item there exists a triangle $R\to \BP'\to \BP''\to R[1]$ in $\Der R$ such that $\BP'$ and $\BP''$ are in $\Add \BP$.
\end{enumerate}
\end{prop}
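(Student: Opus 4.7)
My approach treats the two implications separately. The easier direction is $(2)\Rightarrow(1)$, where the triangle is given and I must verify $\Gen(T)=\D_\sigma$.

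For $\Gen(T)\subseteq\D_\sigma$, I would take $M\in\Gen(T)$ with a chosen presentation $0\to K\to T^{(I)}\to M\to 0$ and apply $\Hom_{\Der R}(\BP,-)$ to the induced triangle. Using the canonical truncation triangle $(\Ker\sigma)^{(I)}[1]\to\BP^{(I)}\to T^{(I)}\to(\Ker\sigma)^{(I)}[2]$ together with the presilting condition (S1) (which supplies $\Hom_{\Der R}(\BP,\BP^{(I)}[n])=0$ for $n>0$) and the trivial vanishing $\Hom_{\Der R}(\BP,(\Ker\sigma)^{(I)}[k])=0$ for $k\geq 3$ (a direct chain-map computation, legitimate because $\BP$ is a bounded projective complex concentrated in degrees $-1,0$), I obtain $\Hom_{\Der R}(\BP,T^{(I)}[1])=0$. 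Combined with the analogous vanishing $\Hom_{\Der R}(\BP,K[2])=0$, this forces $\Hom_{\Der R}(\BP,M[1])=0$, so $M\in\D_\sigma$ by Lemma~\ref{basic-hom-def}(2). For the reverse inclusion $\D_\sigma\subseteq\Gen(T)$, I would take $M\in\D_\sigma$ and apply $\Hom_{\Der R}(-,M)$ to the given triangle. Since $M\in\D_\sigma$ kills $\Hom_{\Der R}(\BP',M[1])$ and $\Hom_{\Der R}(\BP'',M[1])$ (both are built from $\Def_\sigma(M)=0$), and $\Ext^n_R(R,M)=0$ for $n\geq 1$, the long exact sequence collapses to a surjection $\Hom_{\Der R}(\BP',M)\twoheadrightarrow M$. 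Identifying $\Hom_{\Der R}(\BP',M)\cong\Hom_R(T',M)$ with $T'=\Ho^0(\BP')\in\Add T$ via Lemma~\ref{basic-hom-def}(1), the surjection becomes evaluation at $t_0:=\Ho^0(\alpha)(1_R)\in T'$, where $\alpha\colon R\to\BP'$ is the first arrow of the triangle. Thus every $m\in M$ arises as $\psi(t_0)$ for some $R$-linear $\psi\colon T'\to M$; summing the images of all such $\psi$ produces the desired epimorphism from a direct sum of copies of $T$ onto $M$.

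For $(1)\Rightarrow(2)$, I would construct the triangle by finding a morphism $\alpha\colon R\to\BP'$ with $\BP'\in\Add\BP$ that is \emph{universal} in the sense that every morphism $R\to Y$ in $\Der R$ with $Y\in\BP^{\perp_{>0}}$ factors through $\alpha$. Completing to a triangle $R\to\BP'\to\BP''\to R[1]$, I would verify $\BP''\in\Add\BP$ via Lemma~\ref{addP}. The condition $\BP''\in\BP^{\perp_{>0}}$ follows by applying $\Hom_{\Der R}(\BP,-)$ to the triangle and combining $\BP'\in\BP^{\perp_{>0}}$ with $\Hom_{\Der R}(\BP,R[n])=0$ for $n\geq 2$ (a chain-map computation). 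The condition $\Hom_{\Der R}(\BP'',Y[1])=0$ for $Y\in\BP^{\perp_{>0}}$ is precisely the long-exact-sequence translation of the universal factorization property of $\alpha$, using that $\Hom_{\Der R}(\BP',Y[1])=0$.

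The main obstacle is the construction of the universal map $\alpha$ in $(1)\Rightarrow(2)$. The naive candidate $R\to\BP^{T}$ with $T=\Hom_{\Der R}(R,\BP)$ produces a product, which in general lies outside $\Add\BP$, since $\Add\BP$ is closed under coproducts but not under products. Overcoming this requires exploiting the silting hypothesis $\Gen(T)=\D_\sigma$: via an iterative $\Add\BP$-approximation procedure, whose termination after finitely many steps is ensured by the two-term concentration of $\BP$ (equivalently, by the combination of (S1) with the generating condition (S2) that silting provides through \cite[Theorem~4.9]{AMV}), one replaces the product by a suitable coproduct up to $\BP^{\perp_{>0}}$-equivalence. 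This approximation-theoretic construction, essentially the content of the argument in \cite{Wei-semi}, is the technical core of the proof.
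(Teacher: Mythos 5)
Your treatment of $(2)\Rightarrow(1)$ is correct and in spirit close to the paper's. You re-derive from scratch the fact that presiltingness forces $\Gen(T)\subseteq\D_\sigma$ (the paper instead invokes \cite[Lemma 4.8(3)]{AMV}, i.e.\ that $T$ is partial silting), and your argument for $\D_\sigma\subseteq\Gen(T)$ — applying $\Hom_{\Der R}(-,M)$ to the triangle and using $\Hom_{\Der R}(\BP'',M[1])=0$ to get the surjection $\Hom_{\Der R}(\BP',M)\twoheadrightarrow M$, then $\Ho^0(\BP')\in\Add T$ — matches the paper's one-line proof. That direction is sound.

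The gap is in $(1)\Rightarrow(2)$: you do not actually construct the triangle. You correctly identify that the naive approach (an $\Add\BP$-preenvelope of $R$ built from $\Hom_{\Der R}(R,\BP)$) would a priori produce a \emph{product} of copies of $\BP$ — which need not lie in $\Add\BP$ — and you then propose overcoming this by an unspecified ``iterative $\Add\BP$-approximation procedure'' whose termination you assert but do not prove, ultimately deferring to \cite{Wei-semi}. This leaves the core of the implication unestablished. The paper resolves the obstacle in one step by flipping the direction of approximation: rather than a preenvelope of $R$, take $I=\Hom_{\Der R}(\BP,R[1])$ and the canonical $\Add\BP$-\emph{precover} $\BP^{(I)}\xrightarrow{\ \beta\ } R[1]$, which is already a \emph{coproduct} (no product ever appears). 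Completing $\beta$ to a triangle $R\to\BQ\to\BP^{(I)}\to R[1]$, one then checks directly that $\BQ\in\BP^{\perp_{>0}}$ (long exact sequence plus (S1)) and that $\Hom_{\Der R}(\BQ,Y[1])=0$ for all $Y\in\BP^{\perp_{>0}}$ (using $\Ho^i(Y)=0$ for $i>0$), whereupon Lemma~\ref{addP} yields $\BQ\in\Add\BP$. You would need to either carry out this precover construction or genuinely make your iterative procedure precise and prove it terminates; as written, the implication is only sketched, not proved.
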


\begin{proof}
(1)$\Rightarrow$(2) Let $I=\Hom_{\Der{R}}(\BP,R[1])$, and we consider a triangle 
$$ R\to \BQ\overset{\beta}\to \BP^{(I)}\to R[1]$$ induced by the canonical $\Add \BP$-precovering $\BP^{(I)}\overset{\beta}\to R[1]$ (this means that $\Hom_{\Der{R}}(\BP,\beta)$ is an epimorphism). 
Applying the functor $\Hom_{\Der{R}}(\BP,-)$ to the above triangle, we obtain the exact sequence of $k$-modules
\begin{align*} &\Hom_{\Der{R}}(\BP,R[i])\to 
\Hom_{\Der{R}}(\BP,\BQ[i])\to \Hom_{\Der{R}}(\BP,\BP^{(I)}[i])\to \\
\to & \Hom_{\Der{R}}(\BP,R[i+1])\to 
\Hom_{\Der{R}}(\BP,\BQ[i+1])\to \Hom_{\Der{R}}(\BP,\BP^{(I)}[i+1])\end{align*}
for all $i\geq 0$. 
Since $\Hom_{\Der{R}}(\BP,\beta)$ is an epimorphism,  $\Hom_{\Der{R}}(\BP,\BP^{(I)})[i]=0$ for all $i>0$, and 
$\Hom_{\Der{R}}(\BP,R[i])=0$ for all $i\geq 2$,
it follows that $\BQ\in \BP^{\perp_{>0}}$. 

Let $Y\in \BP^{\perp_{> 0}}$. By \cite[Theorem 4.9]{AMV} we know that $\Ho^i(Y)=0$ for all $i>0$. Then 
$\Hom_{\Der{R}}(R,Y[1])=0$. Since $\Hom_{\Der{R}}(\BP^{(I)},Y[1])=0$, it follows that $\Hom_{\Der{R}}(\BQ,Y[1])=0$.

Therefore, we can apply Lemma \ref{addP} to obtain that $\BQ\in \Add\BP$.   

(2)$\Rightarrow$(1) From the existence of the triangle $R\to \BP'\to \BP''\to R[1]$, it follows that $\BP$ is a generator for $\Der{R}$. Now the conclusion follows from \cite[Theorem 4.9]{AMV}.
\end{proof}

\begin{rem}\label{envelope}
The exact sequence $R\to \Ho^0(\BP')\to \Ho^0(\BP'')\to 0$ induced in cohomology by the triangle $R\to \BP'\to \BP''\to R[1]$ is a 
$\Gen(T)$-preenvelope for $R$. Therefore, Proposition \ref{triangle} is the triangulated version of \cite[Proposition 3.11]{AMV}.
\end{rem}

\subsection{Good silting complexes}
Using the same technique as in \cite[Proposition 3.1]{Baz1} we obtain the following 

\begin{cor}\label{good-1}
Let $T$ be a silting module with respect to a morphism $\sigma$, and let $\BP$ be the silting complex associated to $\sigma$. Then there exists a silting complex $\BQ$ such that 
\begin{enumerate}[{\rm (1)}]
 \item there exists a triangle $R\to \BQ\to \BQ'\to R[1]$ such that $\BQ'$ is a direct summand of $\BQ$;
 \item the silting module  $\Ho^0(\BQ)$ generates the same torsion theory as $T$.
 \end{enumerate}
\end{cor}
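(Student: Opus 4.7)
The plan is to invoke Proposition \ref{triangle} to obtain a triangle $R\to \BP'\to \BP''\to R[1]$ in $\Der R$ with $\BP',\BP''\in\Add\BP$, and then enlarge $\BP$ by suitable summands so that the right-hand term can be absorbed as a direct summand of the enlargement. Concretely, I would set
$$\BQ \;=\; \BP\oplus \BP' \oplus (\BP'')^{(\N)},$$
and take the direct sum of the above triangle with the trivial triangle $0\to \BP\oplus(\BP'')^{(\N)}\to \BP\oplus(\BP'')^{(\N)}\to 0$. Using the canonical isomorphism $\BP''\oplus(\BP'')^{(\N)}\cong (\BP'')^{(\N)}$, the result rewrites as
$$R\to \BQ \to \BP\oplus (\BP'')^{(\N)}\to R[1],$$
and $\BQ':=\BP\oplus(\BP'')^{(\N)}$ is manifestly a direct summand of $\BQ$. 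This yields (1).

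Next I would check that $\BQ$ is a silting complex. Since $\BP'$ is a direct sum of copies of $\BP$ and $\BP''\in\Add\BP$, also $(\BP'')^{(\N)}\in\Add\BP$; together with the summand $\BP$, this gives $\BQ\in\Add\BP$, and combined with $\BP$ being a direct summand of $\BQ$ we obtain $\Add\BQ=\Add\BP$. Property (S1) for $\BQ$ follows because any $\BQ^{(J)}$ is a direct summand of some $\BP^{(K)}$, which lies in $\BP^{\perp_{>0}}=\BQ^{\perp_{>0}}$; and property (S2) is immediate since $\Add\BQ=\Add\BP$ generates the same triangulated subcategory as $\Add\BP$. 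Thus $\BQ$ is silting and (by Proposition \ref{triangle} applied to $\BQ$, or by direct inspection of the triangle already produced) $\Ho^0(\BQ)$ is silting with respect to the morphism of projectives induced by $\BQ$.

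For (2), I would use that $\Ho^0$ commutes with arbitrary direct sums, giving
$$\Ho^0(\BQ)=T\oplus \Ho^0(\BP')\oplus \Ho^0(\BP'')^{(\N)}.$$
Since $\Add\BQ=\Add\BP$ and $\Ho^0(\Add\BP)\subseteq \Add T$, the module $\Ho^0(\BQ)$ lies in $\Add T\subseteq \Gen(T)$, hence $\Gen(\Ho^0(\BQ))\subseteq\Gen(T)$. For the reverse inclusion, $T$ is visibly a direct summand of $\Ho^0(\BQ)$, so $T\in\Gen(\Ho^0(\BQ))$ and therefore $\Gen(T)\subseteq\Gen(\Ho^0(\BQ))$. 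The two torsion classes coincide, and the associated torsion theories agree.

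The only delicate point is clause (1), where we need $\BQ'$ to be a direct summand of $\BQ$ itself (not merely in $\Add \BQ$); the countable-power trick $(\BP'')^{(\N)}$ is what makes the extra copy of $\BP''$ on the right be absorbable. Once that is set up, everything else reduces to the observation $\Add \BQ = \Add \BP$, which transfers (S1), (S2) and the generated class from $\BP$ to $\BQ$ essentially verbatim.
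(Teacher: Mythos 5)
Your proof is correct and uses the same Eilenberg--swindle strategy as the paper; the only real variation is in how you package the construction. The paper takes $\BQ=\BP'\oplus(\BP'')^{(\omega)}$ and $\BQ'=(\BP'')^{(\omega)}$, then observes that $\BQ$ is presilting and re-invokes Proposition~\ref{triangle} (via the triangle just produced) to conclude that it is silting; the inclusion $\Gen(T)\subseteq\Gen(\Ho^0(\BQ))$ is then obtained by applying $\Hom_{\Der R}(-,M)$ to the triangle. You instead tack on an extra copy of $\BP$, setting $\BQ=\BP\oplus\BP'\oplus(\BP'')^{(\N)}$; this buys you $\Add\BQ=\Add\BP$ at once, so that (S1) and (S2) transfer verbatim and both inclusions in~(2) become elementary module statements ($\Ho^0(\BQ)\in\Add T$ for one direction, $T$ a direct summand of $\Ho^0(\BQ)$ for the other), avoiding any further appeal to Proposition~\ref{triangle}. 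Both routes are sound; yours is marginally more self-contained. One minor slip: you claim ``$\BP'$ is a direct sum of copies of $\BP$,'' but Proposition~\ref{triangle} only gives $\BP'\in\Add\BP$ (a summand of such a coproduct); since all you actually use is $\BP'\in\Add\BP$, this does not affect the argument.
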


\begin{proof}
(1) We start with a triangle $R\overset{\alpha}\to \BP'\overset{\beta}\to \BP''\to R[1]$. If $\BQ=\BP'\oplus \BP''^{(\omega)}$ and $\BQ'=\BP''\oplus \BP''^{(\omega)}\cong \BP''^{(\omega)}$ then we have a triangle 
$$R\overset{\alpha\oplus 0}\longrightarrow \BQ\overset{\beta\oplus 1_{\BP''^{(\omega)}}}\longrightarrow \BQ'\to R[1].$$ 
It is easy to see that $\BQ$ is partial silting, hence Lemma \ref{triangle} proves that $\BQ$ is a silting complex.

(2) Since $\Hom_{\Der{R}}(\BQ'[-1],M)=0$ for all $M\in \Gen(T)$, it follows that $\Gen(T)\subseteq \Gen(\Ho^0(\BQ))$. The converse inclusion is obvious, so we conclude that $\Gen(T)= \Gen(\Ho^0(\BQ))$.
 \end{proof}


A   torsion theory $(\T,\F)$ in $\Modr R$ is called \textsl{silting torsion theory} if there exists a silting module $S$ such that $\T=\Gen(S)$. By Corollary \ref{good-1} we know that there 
exists a silting complex $\BP$ such that the silting module $T=\Ho^0(\BP)$
generates the class $\T$ and there exists a triangle 
\[R\to \BP^n\to \BP'\to R[1]\] in $\Der R$ such that $\BP'\in\add \BP$.
Such a complex will be called a \textsl{good silting complex}.

\begin{expl}\label{compact-is-good}
 Every compact siling object is good. Indeed, if $\BP$ is silting compact, then 
 we can suppose that $P^{-1}$ and $P^0$ are finitely generated. Therefore, it is not hard to see that in the proof of Proposition \ref{triangle} 
we can find a finite set $I$ and a triangle 
$$ R\to \BQ\overset{\beta}\to \BP^{(I)}\to R[1]$$ such that $\BQ\in\Add{\BP}$. Since the class of compact objects is closed under extensions, 
it follows that $\BQ$ is compact, so $\BQ\in\add{\BP}$, hence $\BP$ is a good, cf. also \cite[Corollary 3.3]{Buan}.
\end{expl}



\subsection{Derived functors induced by silting complexes}\label{functors-der-dg}

By \cite[Example 2.1 a)]{keller-functori} we observe that the ordinary ring $R$ can be viewed as a dg-algebra concentrated in degree $0$. Therefore, a dg-module over $R$ is a 
complex of ordinary (right) $R$-modules, hence $\Mod{R}$ is the category of all complexes of $R$-modules.  
We can identify 
$\Der{R}=\Der{R}$, and we view $\BP$ as an $R$-dg-module. The complex $\BP$ is cofibrant because it is a bounded complex with projective entries. Therefore $\RHom_R(\BP,-)=\Hom_R^\bullet(\BP,-)$.  

By \cite[Example 2.1.b)]{keller-functori} $\BP$ induces a dg-algebra \[\DgEnd_R(\BP)=\Hom_R^\bullet(\BP,\BP),\] called the {\em endomorphism dg-algebra of $\BP$}. 

Let us observe that $\RHom_R(R,\BP)$ is the complex $$\dots \to 0\to \Hom_R(R,P^{-1})\overset{\sigma\circ-}\longrightarrow \Hom_R(R,{P^0})\to 0\cdots$$ which is concentrated 
in the degrees $-1$ and $0$. This complex has a canonical structure as a dg-module over the dg-algebra $\DgEnd_R(\BP)$.  Therefore $\BP$ becomes a dg-$\DgEnd_R(\BP)$-$R$-bimodule and consequently 
it induces 
 the right derived covariant functors
\[\RHom_R(\BP,-):\Der{R}\leftrightarrows\Der{\DgEnd_R(\BP)}:-\dotimes_{\DgEnd_R(\BP)}\BP.\]  

Further observe that $\DgEnd_R(\BP)=\Hom^\bullet_R(\BP,\BP)$ can be represented as the  complex
\begin{align*}0&\to \Hom_R(P^0,P^{-1})\stackrel{\left(\begin{array}{c}-\circ\sigma\\ \sigma\circ-\end{array}\right)}\la\Hom_R(P^{-1},P^{-1})\times \Hom_R(P^0,P^0)\to \\ 
&\stackrel{\left(\begin{array}{cc} \sigma\circ- & -\circ(-\sigma)\end{array}\right)}\la\Hom_R(P^{-1},P^0)\to 0
\end{align*}
which is concentrated in degrees $-1$, $0$ and $1$. From (S1) above it follows that  $\Hom_{\Der R}(\BP,\BP[1])=0$, hence  $\Ho^i(\DgEnd_R(\BP))=0$ for all $i>0$, so $\DgEnd_R(\BP)$ is homologically non-positive. We denote by $B$ the "smart" truncation of $\DgEnd_R(\BP)$, that is \[B=\bigoplus_{i\in\Z}B^i,\hbox{ where }B^i=\begin{cases}\DgEnd_R(\BP)^i\hbox{ if }i<0\\ \Zy^0(\DgEnd_R(B))\hbox{ if }i=0\\ 0\hbox{ if }i>0\end{cases}.\] Then 
$B$ is a non-positive dg-algebra and the obvious dg-algebra homomorphism $B\to\DgEnd_R(\BP)$ is actually a quasi-isomorphism. 
Hence every dg-$\DgEnd_R(\BP)$-module becomes a dg-$B$-module by restriction of scalars. As in \cite[Section 12.4]{Yek} we do not distinguish notationally between such a dg module seen as $\DgEnd_R(\BP)$-module or a $B$-module.  Moreover restriction of scalars functor is an equivalence with the quasi-inverse the induction functor, that is the derived tensor product
$-\dotimes_{B}\BP$.
Composing this equivalence with the previous adjoint pair and using the asociativity, up to a natural equivalence, of the derived tensor product, we get an adjoint pair: 
\[\RHom_R(\BP,-):\Der{R}\leftrightarrows\Der{B}:-\dotimes_B\BP.\] 

\begin{lem}\label{b-prop-rhom} The following statements are true:
\begin{enumerate}[{\rm (1)}]
 \item the functors $\RHom_R(\BP,-):\Der{R}\leftrightarrows\Der{B}:-\dotimes_B\BP$ are triangle functors;
 \item $-\dotimes_B\BP$ is a left adjoint for $\RHom_R(\BP,-)$; 
 \item $B\dotimes_B\BP \cong \BP$; 
 \item $\RHom_{B\opp}(B,\BP)\cong \BP$. 
\end{enumerate}
\end{lem}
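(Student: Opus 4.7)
The plan is to treat items (1)--(2) as consequences of the general machinery of derived categories of dg-modules, and to verify (3)--(4) by a direct computation using that the regular bimodule $B$ is its own cofibrant resolution.

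For (1) and (2), I would appeal to the standard theory as developed in \cite{KDG}, \cite{KDU}, and \cite{keller-functori}. Specifically, since the dg-Hom and dg-tensor bifunctors $\Hom^\bullet_R(\BP,-)$ and $-\otimes^\bullet_B\BP$ are additive functors between the underlying module categories that preserve exact sequences of dg-modules and send null--homotopic morphisms to null--homotopic ones, they descend to triangle functors at the level of homotopy categories, and then to the derived categories after inverting quasi-isomorphisms; this gives (1). For (2), I would invoke the fact that $\RHom_R(\BP,-)$ and $-\dotimes_B\BP$ are the derived versions of an adjoint pair $(-\otimes^\bullet_B\BP,\Hom^\bullet_R(\BP,-))$ at the dg-level. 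The unit and counit at the dg-level survive the passage to the derived categories because we compute $-\dotimes_B\BP$ using a cofibrant replacement in $\Mod{B,d}$ and $\RHom_R(\BP,-)$ using a fibrant replacement in $\Mod{R,d}$. This is precisely Keller's derived adjunction; I would simply cite it rather than re-prove it.

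For (3), the key observation is that the regular right dg-$B$-module $B$ is cofibrant (it is free of rank one, so $\Hom_{\Htp{B,d}}(B,N)=\Ho^0(N)=0$ for every acyclic $N$). Consequently, no cofibrant replacement is needed on the left factor, and
\[
B\dotimes_B\BP = B\otimes^\bullet_B\BP \cong \BP
\]
via the canonical isomorphism $b\otimes x\mapsto bx$, which is a morphism of dg-$R$-modules.

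For (4), symmetrically, the regular left dg-$B$-module $B$ is cofibrant in $\Mod{B\opp,d}$ by the same reasoning. Therefore
\[
\RHom_{B\opp}(B,\BP) = \Hom^\bullet_{B\opp}(B,\BP) \cong \BP,
\]
the last isomorphism being the canonical evaluation $f\mapsto f(1)$, which is a morphism of right dg-$R$-modules. The main conceptual point in both (3) and (4) is the cofibrancy of $B$; once this is noted, no further calculation is required. The only genuine subtlety worth double-checking is that the $B$-$R$-bimodule structure on $\BP$ used in these identifications matches the one coming from the identification $\RHom_R(R,\BP)\cong \BP$ made just before the lemma, but this is immediate from the fact that $R$ is concentrated in degree $0$ and $\Hom^\bullet_R(R,-)$ is naturally isomorphic to the identity on $\Mod{R,d}$.
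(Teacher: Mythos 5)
Your proof is correct and takes essentially the same route as the paper, which simply cites the Stacks Project notes on dg-algebras (Propositions 21.4 and 24.4 and Example 24.5 there) for items (1)--(3); you additionally spell out the key observation — that the regular module $B$ is cofibrant on either side, being free of rank one — and you explicitly handle item (4), which the paper's one-line proof actually omits. Both treatments amount to invoking the standard derived-adjunction and free-module identities from dg-homological algebra.
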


\begin{proof}
For (1) see \cite[Proposition 24.4]{dga}. For (2) and (3) see \cite[Proposition 21.4]{dga} completed by 
\cite[Example 24.5]{dga}.
\end{proof}


\section{The silting theorem}\label{silting-thm}

\subsection{The setting and some basic properties} \label{setting-sect}\label{setting-1}\label{setting-2}
We are ready to fix some objects and homomorphisms which will be used in the following. 

Let $k$ be a commutative ring, and $R$ a $k$-algebra. We will use the following fixed objects, morphisms, and torsion pairs:
\begin{itemize}
 \item $\BP= \dots\to 0\to P^{-1}\overset{\sigma}\to {P^0}\to 0\to \cdots$ is a good silting complex 
 (hence $P^{-1}$ and $P^0$ are projective right $R$-modules); 
 \item $T=\Coker(\sigma)=\Ho^0(\BP)$ is the corresponding silting module;
\item the torsion pair generated by $T$ in $\Mod{R}$ is denoted by $\tau=(\T,\F)$;
\item we will denote by $\CH(\tau)$ be \textsl{the heart} of the $t$-structure associated to $\tau=(\T,F)$, i.e. the category of all objects $X\in \Der{R}$ which lie in triangles 
$F[1]\to X\to M\to F$, where $F\in\F$ and $M\in \T$;
 \item $\BE=\End_{\bD(R)}(\BP)$ is the endomorphism ring of $\BP$ in the derived category of $\Mod{R}$;
\item we consider the torsion pair $(\CU,\CV)$ in $\Mod{\BE}$, where $$\CU=\{X\in\Mod{\BE}\mid X\otimes_\BE T=0\};$$
 \item we fix a triangle 
$$(\dagger)\ \ \ R\overset{\alpha}\to \BP^{n}\overset{\beta}\to \BP'\overset{\gamma}\to R[1]$$ such that $\BP'\in\add{\BP}$.
\item $\DgEnd_R(\BP)$ will denote the endomorphism dg-algebra associated to $\BP$.
\end{itemize}

\begin{rem}\label{beta-stea}
Applying the functor $\Hom_{\bD(R)}(-,\BP)$ on the triangle $(\dagger)$ we obtain the exact sequence of left $\BE$-modules 
$$\Hom_{\bD(R)}(\BP',\BP)\overset{\beta^*}\to \Hom_{\bD(R)}(\BP^n,\BP)\overset{\alpha^*}\to \Hom_{\bD(R)}(R,\BP)\to 0.$$ Therefore, the above exact sequence is a projective presentation for the left $\BE$-module $T\cong \Hom_{\bD(R)}(R,\BP)$. In this setting it will be useful to consider the defect functor associated to the tensor product $$\KT_{\beta^*}=\Ker(-\otimes_{\BE}\beta^*)$$ induced by $\beta^*$.
\end{rem}

\begin{rem}\label{truncation}
As in \ref{functors-der-dg}, we consider $B$ the smart truncation of $\DgEnd_R(\BP)$. Since $B$ is non-positive, we apply \cite[Proposition 2.1]{KY} to observe that  the standard t-structure $\CH(B)$ exists in $\Der{B}$ (that is the subcategory of $\Der{B}$ consisting of 
objects concentrated in degree $0$). 
The heart of this $t$-structure in $\Der{B}$ is denoted by $\CH(B)$. It is easy to see that $\Ho^0(B)=\BE$, and it follows that $\Ho^0:\CH(B)\to\Mod\BE$ is an equivalence. 
\end{rem}

\begin{rem}\label{beta-flat}
Applying the (contravariant) triangle functor $\RHom_R(-,\BP)$ to the triangle $(\dagger)$ above, we obtain a triangle in $\Der{B\opp}$:
 \[(\ddagger)\hskip5mm B'\overset{\beta^\flat}\longrightarrow B^n\longrightarrow \BP\longrightarrow B'[1]\] 
 where the entries of this triangle are identified as $B'=\RHom_R(\BP',\BP)\in\add B$, $\ B^n=\RHom_R(\BP,\BP)^n\cong\RHom_R(\BP^n,\BP), $  $\beta^\flat=\RHom_R(\beta,\BP),$ and $\RHom_R(R,\BP)\cong\BP$. 
\end{rem}



\begin{rem}\label{pstar} If we view $\BE$ as a dg-agebra concentrated in degree $0$, then there is an obvious homomorphism of dg-algebras $p:B\to\BE$. 
Using \cite[Theorem 12.4.23(1)]{Yek}, $p$ induces the extension and the restriction of scalar functors $$p^*=-\dotimes_B\BE:\Der{B}\leftrightarrows\Der\BE:p_*,$$ and $p_*$ is the right adjoint of $p^*$. 
Note that the restriction of $p^*$ to $\CH(B)$ coincides with the restriction of $H^0$ to $\CH(B)$. Therefore, the restriction of $p_*$ at $\Mod\BE$ is a quasi-inverse of the equivalence 
$\Ho^0:\CH(B)\to\Mod\BE$. 
\end{rem}


\begin{lem}\label{rhom-heart}
If $X\in\Der{R}$ is a complex concentrated in $-1$ and $0$ then $$\Ho^{0}(\RHom_R(\BP,X))=\Hom_{\Der{R}}(\BP,X).$$

Moreover, the following are equivalent 
\begin{enumerate}[{\rm (a)}]
 \item $X\in\CH(\tau)$;
 
 \item $\RHom_R(\BP,X)\in\CH(B)$. 
\end{enumerate}
\end{lem}

\begin{proof}
The complex $X\in\CH(\tau)$ is isomorphic to a complex $$\cdots \to 0\to X^{-1}\overset{\alpha}\to X^0\to 0\to \cdots$$ which is concentrated in $-1$ and $0$ such that 
$\Ker(\alpha)\in \F$ and $\Coker(\alpha)\in \T$.
Then $\RHom_R(\BP,X)$ is the complex
\begin{align*}0&\to \Hom_R(P^0,X^{-1})\stackrel{\left(\begin{array}{c}-\circ\sigma\\ \alpha\circ-\end{array}\right)}\la\Hom_R(P^{-1},X^{-1})\times \Hom_R(P^0,X^0)\to \\ 
&\stackrel{\left(\begin{array}{cc} \alpha\circ- & -\circ(-\sigma)\end{array}\right)}\la\Hom_R(P^{-1},X^0)\to 0,
\end{align*}
and the first conclusion can be obtained by a direct computation.

(a)$\Rightarrow$(b)
Let $f:P^0\to X^{-1}$ be an $R$-morphism such that $f\sigma=0$ and $\alpha f=0$. From $f\sigma=0$ it follows that there exists $g:T\to X^{-1}$ such that $f=g\pi$, where 
$\pi:P^0\to T$ is the cokernel of $\sigma$. Since $\pi$ is an epimorphism, it follows by $\alpha f=0$ that $\alpha g=0$. Then $g$ factorizes through a morphism $T\to \Ker(\alpha)$. 
But $\Ker(\alpha)\in \F$, and we obtain $g=0$, hence $f=0$. 

Using similar techniques it follows that  $\left(\begin{array}{cc} \alpha\circ- & -\circ(-\sigma)\end{array}\right)$ is surjective. Then $\RHom_R(\BP,X)$ is in fact 
isomorphic to the complex concentrated in $0$ which is represented by $$\Ho^0(\RHom_R(\BP,X))=\Hom_{\Der{R}}(\BP,X).$$     

(b)$\Rightarrow$(a) Let $u:\Ker(\alpha)\to X^{-1}$ be the inclusion map. Suppose that $\Ker(\alpha)\notin \F$. Then there is a nonzero morphism $g:T\to \Ker(\alpha)$. 
It is easy to see that $u g\pi:P^{0}\to X^{-1}$ is a nonzero morphism which belongs to the kernel of $\left(\begin{array}{c}-\circ\sigma\\ \alpha\circ-\end{array}\right)$, a contradiction. 
Therefore, $\Ker(\alpha)\in\F$.

Let $K=\Coker(\alpha)$, and denote by $p:X^0\to K$ the canonical surjection. We will prove that $K\in \D_\sigma$. If $f:P^{-1}\to K$ is a morphism, it can be lifted to a morphism 
$g:P^{-1}\to X^0$ such that $f=pg$. Since $\left(\begin{array}{cc} \alpha\circ- & -\circ(-\sigma)\end{array}\right)$ is surjective, there exist morphisms 
$h^{i}:P^{i}\to X^{i}$, $i\in\{-1,0\}$, such that $g=\alpha h^{-1}-h^0\sigma$. It follows that $f=-ph^0\sigma$, hence $K\in\D_\sigma$. Since $D_\sigma=\T$, the proof is complete. 
\end{proof}



We recall that applying the functor $\Hom_{\Der{R}}(-,\BP)$ to the triangle $(\dagger)$ we obtain the exact sequence of left $\BE$-modules 
$$\Hom_{\bD(R)}(\BP',\BP)\overset{\beta^*}\to \Hom_{\bD(R)}(\BP^n,\BP)\overset{\alpha^*}\to \Hom_{\bD(R)}(R,\BP)\to 0.$$

\begin{lem}
\label{prop-KT-dg} Let $Y$ be a an object in $\CH(B)$.  
\begin{enumerate}[{\rm (1)}]
\item The restrictions of the functors 
$$\Ho^0(Y\dotimes_B\RHom_R(-,\BP)) \textrm{ and } \Ho^0(Y)\otimes_\BE\Hom_{\Der{R}}(-,\BP)$$ to $\add{\BP}$ are naturally isomorphic. 

\item There are natural isomorphisms of $R$-modules \[\Ho^0(Y\dotimes_B\BP)\cong\ \Ho^0(Y)\otimes_\BE T\hbox{ and }\Ho^{-1}(Y\dotimes_B\BP)\cong\Ker(\Ho^0(Y)\otimes_\BE\beta^*).\]

\item For all $i\notin\{-1,0\}$ we have $\Ho^i(Y\dotimes_B\BP)=0$. 
\end{enumerate}
\end{lem}

\begin{proof} (1) 
Let $X=\dots\to 0\to X^{-1}\overset{\rho}\to X^0\to 0 \to \dots$ be a complex from $\add{\BP}$, where $X^{-1}$ and $X^0$ are projective $R$-modules. We replace the complex $\RHom_R(X,\BP)$ by its smart truncation 
$$ \dots \to 0\to \Hom_R(X^0,P^{-1})\overset{\Phi}\to Z^0(\RHom_R(X,\BP))\to 0\to \dots,$$
where \begin{align*} Z^0(\RHom_R(X,\BP))=\left\{ \right.& (\alpha^{-1},\alpha^0)\in\Hom(X^{-1},P^{-1})\times \Hom(X^0,P^0)\mid \\   & \left. \sigma\alpha^{-1}=\alpha^0\rho\right\}.\end{align*}

Since $Y\in \CH(B)$, we can replace it by $p_*\Ho^0(Y)$. The homomorphism of dg-algebras $p:B\to\BE$ from Remark \ref{pstar} induces a ring homomorphism $p:B^0\to \BE$. It follows that we suppose that $Y$ is a complex concentrated in $0$ and $Y^0$ is the restriction along the homomorphism $p:B^0\to \BE$ of the $\BE$-module $\Ho^0(Y)$. Moreover $\Coker(\Phi)=\Hom_{\Der{R}}(X,\BP)$. 

Since $X\in\add{\BP}$, it follows that $\RHom_R(X,\BP)\in\add{B}$ is cofibrant. It follows, by using the definition of $Y\otimes^\bullet_B \RHom_R(X,\BP)$ that the functors $$\Ho^0(Y\dotimes_B\RHom_R(X,\BP))\textrm{ and }\Ho^0(Y)\otimes_{B^0}\Hom_{\Der{R}}(X,\BP)$$ are naturally isomorphic. 

Using \cite[Proposition II.2]{Bourb} we observe that, in order to complete the proof, it is enough to prove that $\Ker(p)$ is contained in the annihilators of the $B^0$ modules $\Ho^0(Y)$ and $\Hom_{\Der{R}}(X,\BP)$. For $\Ho^0(Y)$ this is obvious since $\Ho^0(Y)$ is a module obtained via the restriction of scalars functor. 

Let $(\alpha^{-1},\alpha^0)\in \Ker(p)$. It follows that there exists $s:P^0\to P^{-1}$ such that $\alpha^0=\sigma s$ and $\alpha^{-1}=s\sigma$. Note $B^0$ acts on $\Hom_{\Der{R}}(X,\BP)$ via the composition of maps (of complexes): $(\alpha^{-1},\alpha^0)\widehat{(f^{-1},f^0)}=\widehat{(\alpha^{-1}f^{-1},\alpha^0f^0)}$ (here $\widehat{(f^{-1},f^0)}$ represents the homotopy class of $(f^{-1},f^0)$). It follows that $\Ker(p) \Hom_{\Der{R}}(X,\BP)=0$, and the proof is complete. 


(2) We apply the triangle functor $Y\dotimes_B-$ to the 
  triangle $(\ddagger)$ from Remark \ref{beta-flat}.  We get a triangle 
  \[ Y\dotimes_BB'\to Y\dotimes_BB^n\to Y\dotimes_B\BP\to Y\dotimes_BB'[1].  \]
Since $Y\dotimes_BB\cong Y$, it follows that  $Y\dotimes_BB'=Y\dotimes_B\RHom_R(\BP',\BP)$ and $Y\dotimes_BB^n=Y\dotimes_B\RHom_R(\BP^n,\BP)$ are elements from $\add{Y}$. It follows that we have the following exact sequence of $k$-modules 
\begin{align*} 0=\Ho^{-1}(Y\dotimes_BB^n)&\to\Ho^{-1}(Y\dotimes_B\BP)\to\Ho^0(Y\dotimes_B B')\longrightarrow\Ho^0(Y\dotimes_BB^n)\\
&\to\Ho^0(Y\dotimes_B\BP)\to\Ho^1(Y\dotimes_BB')= 0. \end{align*}

By (1) this induces the exact sequence 
\begin{align*} 0 \to\Ho^{-1}(Y\dotimes_B\BP)&\to\Ho^0(Y) \otimes_{\BE}\Hom_{\Der{R}}(\BP',\BP) \stackrel{\Ho^0(Y)\otimes_\BE\beta^*}\longrightarrow \\ & \stackrel{\Ho^0(Y)\otimes_\BE\beta^*}\longrightarrow\Ho^0(Y)\dotimes_{\BE}\Hom_{\Der{R}}(\BP',\BP)\to\Ho^0(Y\dotimes_B\BP)\to 0,\end{align*}
and the conclusion is now clear.



(3) This is a consequence of the proof of (2). 
  \end{proof}

\begin{rem}  
By the statement (2) in the above lemma it follows that the functor $\KT_{\beta^*}=\Ker(-\otimes_{\BE}\beta^*)$ defined in 
\ref{setting-1} acts actually between $\Mod{\BE}$ and $\Mod{R}$. Using a similar proof as in \cite[Proposition 4]{BrZ}, 
it follows that it plays a similar role with the role the functor $\Tor_1^\BE(-,T)$ for the case when $T$ is of flat dimension at most 1. We recall that if $T$ is a tilting module then its flat dimension as a left $\End(T)$-module is at most $1$.        
\end{rem}

\subsection{The silting theorem for derived categories}

We will denote $$\K=\Ker(-\dotimes_B \BP)\subseteq\Der{B},$$ and 
$$\K^\perp=\{Y\in \Der{B}\mid \Hom_{\Der{B}}(X,Y[n])=0 \text{ for all } X\in\K \text{ and } n\in\Z\}.$$ 

The silting theorem can be formulated in the following way:

\begin{thm}\label{silting-thm-1}
Let $\BP$ be a good silting complex as in Setting \ref{setting-1}. Then
\begin{enumerate}[{\rm (1)}]
\item  
the functor $\RHom_R(\BP,-)$ induces an equivalence
$$\RHom_R(\BP,-):\Der{R}\leftrightarrows\K^\perp,$$ 
 and  $-\dotimes_B\BP:\K^\perp\to \Der{R}$ is a quasi-inverse for $\RHom_R(\BP,-)$;
 
\item the restrictions of these functors to $\CH(\tau)$ and $\CH(B)\cap K^\perp$ induce an equivalence
\[\RHom_R(\BP,-):\CH(\tau)\leftrightarrows \CH(B)\cap \K^\perp:-\dotimes_B\BP.\]
 \end{enumerate}
 \end{thm}

\begin{proof}
(1) Let us denote by $\gamma$ and $\delta$ the unit, respectively the counit, associated to the adjunction 
$(-\dotimes_B \BP)\dashv \RHom_R(\BP,-)$. Then the map $\gamma_B:B\to\RHom_R(\BP,B\dotimes_B\BP)$ is an isomorphism, and the triangle $(\dagger)$  
implies that $R$ lies in the smallest thick subcategory containing $\BP$. Therefore the condition 4) from \cite[Theorem 6.4]{Ni-Sa} holds true. 
By the (equivalent) condition 3) of the above cited Theorem it follows that 
$\RHom_R(\BP,-)$ is fully faithful, hence
$\delta:\RHom_R(\BP,-)\dotimes_B \BP\to 1_{\Der{R}}$ is an isomorphism. From the adjunction isomorphism  
$\Hom_{\Der B}(Y,\RHom_R(\BP,X))\cong\Hom_{\Der R}(Y\dotimes_B\BP,X)$ we obtain $\RHom_R(\BP,\Der{R})\subseteq \K^\perp$.

Conversely, if 
$Y\in\K^\perp$, we have $\delta_{Y\dotimes_B\BP}(\gamma_Y\dotimes_B\BP)=1_{Y\dotimes_B\BP}$.  Since $\RHom_R(\BP,-)$ is fully faithful, it follows that $\delta_{Y\dotimes_B\BP}$ is an isomorphism. Then $\gamma_Y\dotimes_B\BP$ is an isomorphism. 
Therefore, completing $\gamma_Y$ to a triangle 
$$Z\overset{\alpha}\to Y\overset{\gamma_Y}\la \RHom_R(\BP,Y\dotimes_B\BP)\overset{\beta}\to Z[1],$$
it follows $Z\dotimes_B\BP=0$. This implies that $Z\in \K$, hence $\alpha=0$ and $\alpha[1]=0$. It follows that $\beta$ is a split homomorphism. Since $\RHom_R(\BP,Y\dotimes_B\BP)\in \K^\perp$ and $Z[1]\in K$, this is possible only if $Z=0$. Then $\gamma_Y$ is an isomorphism.

Therefore $\RHom_R(\BP,\Der{R})= \K^\perp$. This shows that the functors 
\[\RHom_R(\BP,-):\Der R\leftrightarrows\Der{B}\cap\K^\perp:-\dotimes_B\BP\] 
induce mutually inverse equivalences.

(2) Using Lemma \ref{rhom-heart} it follows that for every $X\in\CH(\tau)$ we have $\RHom_R(\BP,X)\in \CH(B)$. 

Conversely, let $Y\in \CH(B)\cap\K^\perp$. By using Lemma \ref{prop-KT-dg}, we observe that $Y\dotimes_B\BP$ is a complex concentrated in $-1$ and $0$. Since $\RHom_R(\BP,Y\dotimes_B\BP)\cong Y$, we can apply Lemma \ref{rhom-heart} one more time to conclude that $Y\dotimes_B\BP\in \CH(\tau)$, and the proof is complete.
\end{proof}

\subsection{The silting counter equivalence}

We have seen in Theorem \ref{silting-thm-1} that $\Ho^0\RHom_R(\BP,-)$ is an equivalence between $\CH(\tau)$ and an abelian subcategory of $\Mod\BE$.  
From \cite[Chapter I, Corollary 2.2]{HRS}, the  pair $(\F[1],\CT)$ is a torsion pair in the abelian category $\CH(\tau)$. It  induces a torsion pair in the abelian subcategory 
$\Ho^0\RHom_R(\BP,\CH(\tau))$ of $\Mod{\BE}$. In the following we will describe, as in the tilting case, this torsion pair by using a natural torsion pair inced by $\BP$ on $\Mod\BE$.

By applying the functor $p^*$ from Remark 
\ref{pstar}, we get a subcategory $p^*(\K)$ of $\Der\BE$. As before, we will use the notation 
$$p^*(\K)^\perp=\{Y\in \Der{\BE}\mid \Hom_{\Der{\BE}}(X,Y[n])=0 \text{ for all } X\in p^*(\K) \text{ and } n\in\Z\}.$$ 

\begin{lem}\label{pK-perp}
Using the above notations we have 
$$\Ho^0(\CH(B)\cap \K^\perp)=\Mod\BE\cap p^*(\K)^\perp.$$
\end{lem}

\begin{proof}
Let $Y\in\CH(B)\cap\K^\perp$. We use the adjunction
$p^*\dashv p_*$ and the fact that $\Ho^0$ is an equivalence with the inverse $p_*$, in order to obtain:
\[\Hom_{\Der\BE}(p^*(\K),\Ho^0(Y))\cong\Hom_{\Der{B}}(\K,p_*(\Ho^0(Y))\cong\Hom_{\Der{B}}(\K,Y)=0,\] so $\Ho^0(Y)\in p^*(\K)^\perp$. 

Conversely, for $Z\in\Mod\BE\cap p^*(\K)^\perp$, we denote $Y=p_*(Z)\in\CH(B)$ and we have $\Ho^0(Y)\cong Z$. Then 
\[\Hom_{\Der{B}}(\K,Y)\cong\Hom_{\Der{B}}(\K,p_*(Z))\cong\Hom_{\Der\BE}(p^*(\K),Z)=0,\] and it follows that $Y\in\K^\perp$. 
\end{proof}

\begin{thm}\label{1-silting-equiv} The following statements are true.
\begin{enumerate}[{\rm (1)}] 
 \item 
The functor
$$\Hom_{\Der{R}}(\BP,-):\CH(\tau)\to \Mod{\BE}\cap p^*(\K)^\perp$$
induces an equivalence of categories, whose quasi-inverse is 
$(-\dotimes_B\BP)\circ p_*$.

\item The restrictions of the above functors induce the equivalences 
\begin{enumerate}[{\rm (a)}]
\item $\Def_{\sigma}(-)=\Hom_{\Der{R}}(\BP,-[1]):\F\leftrightarrows \CU\cap p^*(\K)^\perp: \KT_{\beta^*}(-),$ and

\item $\Hom_R(T,-)=\Hom_{\Der{R}}(\BP,-):\T\leftrightarrows \CV\cap p^*(\K)^\perp: -\otimes_\BE T.$
\end{enumerate}

\end{enumerate}
\end{thm}

\begin{proof} 
(1) By Lemma \ref{rhom-heart} it follows that the restrictions of the functors $\Ho^0(\RHom_R(\BP,-))$ and $\Hom_{\Der R}(\BP,-)$ to $\CH(\tau)$ coincide. Now the conclusion follows from Theorem \ref{silting-thm-1} and Lemma \ref{pK-perp} since we have 
\begin{align*} \Hom_{\Der R}(\BP,\CH(\tau))& =\Ho^0(\RHom_R(\BP,\CH(\tau)))=\Ho^0(\CH(B)\cap \K^\perp)\\ &=\Mod\BE\cap p^*(\K)^\perp 
.\end{align*}

%

(2) Note that if $X\in\CH(\tau)$ then the exact sequence associated to $X$ which is induced by the torsion pair $(\F[1],\T)$ is $$0\to\Ho^{-1}(X)[1]\to X\to\Ho^0(X)\to0.$$ Hence 
$X\in\F[1]$ (respectively $X\in\T$) \iff $\Ho^0(X)=0$ ($\Ho^{-1}(X)=0$).

(2)(a) Fix an object $X\in\CH(\tau)$. The natural map $$\delta_X:\RHom_R(\BP,X)\dotimes_B\BP\to X$$ is an isomorphism. By Lemma \ref{prop-KT-dg}, we have the isomorphisms 
\begin{align*} \Ho^0(X)&\cong\Ho^0(\RHom_R(\BP,X)\dotimes_B\BP)\cong \Ho^0(\RHom_R(\BP,X))\otimes_\BE T\\&= \Hom_{\Der R}(\BP,X)\otimes_\BE T.\end{align*}
As we have seen, $X\in\F[1]$ \iff $\Ho^0(X)=0$, which is further equivalent 
$\Hom_{\Der R}(\BP,X)\in\Ker(-\otimes_\BE T)=\U$.  Therefore the equivalence from (1) induces the equivalence 
\[\Hom_{\Der R}(\BP,-[1]):\F\to\U\cap p^*(\K)^\perp,\] whose quasi-inverse is  $p_*(-[-1])\dotimes_B\BP)$. 

Moreover, for every $Z\in \U\cap p^*(\K)^\perp$, since $p_*(Z)\dotimes_B\BP$ is concentrated in $-1$, we obtain from Lemma \ref{prop-KT-dg}(2) the natural isomorphisms
$$\Ho^{-1}(p_*(Z)\dotimes_B\BP)\cong \Ker(\Ho^0p_*\otimes_{\BE}\beta^*)\cong\Ker(Z\otimes_\BE\beta^*).$$
Therefore, the restrictions of functors $p_*(-)\dotimes_B\BP$ and $\KT_{\beta^*}$ to $\U\cap p^*(\K)^\perp$ are natural isomorphic, and the proof is complete.   

(2)(b) Let $E$ be the endomorphism ring of $T$, and $\varphi:\BE\to E$ be the canonical surjective ring homomorphism. 

If $M\in \T$ then the right $\BE$-module $\Hom_{\Der{R}}(\BP,M)$ is the module induced by the restriction of scalars along $\varphi$ of the $E$-module $\Hom_R(T,M)$. Moreover, if $X\in\Mod{\BE}$ is a module such that $X\otimes_\BE T=0$, then the induced $E$-module $X'=X\otimes_\BE E$ has the property $X'\otimes_E T=0$. By \cite[Proposition 3.2]{AMV} we conclude that $T$ is tilting as an $R/\textrm{Ann}(T)$-module. It follows from the tilting theorem proved in \cite[Theorem 4.5]{Baz1} that $\Hom_E(X',\Hom_R(T,M))=0$. Using the canonical adjunction isomorphisms, we obtain the equality $\Hom_\BE(X, \Hom_{\Der{R}}(\BP,M))=0$, hence $\Hom_{\Der{R}}(\BP,M)\in\CV\cap p^*(K)^\perp$.

Let $X\in \CV\cap p^*(K)^\perp$. Then there exists an object $L\in \CH(\tau)$ such that $\Hom_{\Der{R}}(\BP,L)=X$. Since $(\F[1],\T)$ is a torsion pair in $\CH(\tau)$, there exists a short exact sequence in $\CH(\tau)$ of the form 
$0\to F[1]\to L\to M\to 0$, where $F\in \F$ and $M\in \T$. We apply the functor $\Hom_{\Der{R}}(\BP,-):\CH(\tau)\to \Mod{\BE}$ to this exact sequence. Since $\Hom_{\Der{R}}(\BP,-)$ is an equivalence of categories from $\CH(\tau)$ to a full subcategory of $\Mod{\BE}$,  
we obtain the short exact sequence 
$$0\to \Hom_{\Der{R}}(\BP,F[1])\to \Hom_{\Der{R}}(\BP,L)\to \Hom_{\Der{R}}(\BP,M)\to 0$$ in $\Mod{\BE}$. But 
$\Hom_{\Der{R}}(\BP,F[1])\in \U$ and $\Hom_{\Der{R}}(\BP,L)\cong X\in \CV$. This implies that 
$\Hom_{\Der{R}}(\BP,L)\to \Hom_{\Der{R}}(\BP,M)$ is an isomorphism, hence $L\cong M$ belongs to $\T$. 

It follows that $\Hom_{\Der{R}}(\BP,\T)=\CV\cap p^*(K)^\perp$. Applying Lemma \ref{prop-KT-dg} it is easy to see that 
for every $X\in \CV\cap p^*(K)^\perp$ the complex $p_*(X)\dotimes_B\BP$ is concentrated in $0$. Moreover, we have $\Ho^0(p_*(X)\dotimes_B\BP)=X\otimes_\BE T$. Therefore, the functor $-\otimes_\BE T$ is a quasi-inverse of the functor $\Hom_R(T,-)=\Hom_{\Der{R}}(\BP,-):\T\to \CV\cap p^*(K)^\perp$.
\end{proof}

\begin{cor}
If $\BP$ is a compact silting complex then  we have the equivalences:
\begin{enumerate}[{\rm (a)}] 
 \item 
$\Hom_{\Der{R}}(\BP,-):\CH(\tau)\leftrightarrows \Mod{\BE}:-\dotimes \BP$,
\item $\Hom_R(T,-)=\Hom_{\Der{R}}(\BP,-):\T\leftrightarrows \CV: -\otimes_\BE T=-\otimes_E T,$
and 

\item $\Def_{\sigma}(-)=\Hom_{\Der{R}}(\BP,-[1]):\F\leftrightarrows \CU: -[-1]\dotimes_B \BP=\KT_{\beta^*}(-),$
where $\KT_{\beta^*}$ is computed with respect a (fixed) triangle of the form $(\dagger)$.
\end{enumerate}
\end{cor}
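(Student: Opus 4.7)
The plan is to reduce the corollary to the single assertion $\K=0$. Once that is established, substituting $\K^\perp=\Der{B,d}$, $\CV\cap\K^\perp=\CV$, and $\CU\cap\K^\perp=\CU$ into parts (1), (2)(a), (2)(b) of Theorem \ref{1-silting-equiv} yields precisely the equivalences (a), (b), (c) stated here.

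To prove $\K=0$ I would first recognise $\BP$ as a compact generator of $\Der R$. Compactness is part of the hypothesis; for generation one uses the silting axiom (S2), which says that $\mathbf{K}^b(\mathrm{Proj}(R))$ is the smallest triangulated subcategory of $\Der R$ containing $\Add\BP$. Since $R\in\mathbf{K}^b(\mathrm{Proj}(R))$ it follows that $R\in\Loc\BP$, and because $R$ is itself a compact generator of $\Der R$ the same is true of $\BP$. Having identified $\BP$ as a compact generator, I would then invoke Keller's Morita theorem for dg-algebras (cf.\ \cite{KDG}), which provides a mutually quasi-inverse pair of triangulated equivalences $-\dotimes_B\BP\colon\Der{B,d}\leftrightarrows\Der R\colon\RHom_R(\BP,-)$. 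Since an equivalence of categories is conservative, this immediately gives $\K=\Ker(-\dotimes_B\BP)=0$.

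A variant staying closer to the techniques already developed in the paper is to exploit the proof of Theorem \ref{1-silting-equiv} directly: that proof identifies the essential image of $\RHom_R(\BP,-)$ with $\K^\perp$, so it suffices to show that this image exhausts $\Der{B,d}$. The image contains $B=\RHom_R(\BP,\BP)$, is triangulated, and---because the cohomology functors $\Ho^n\RHom_R(\BP,-)\cong \Hom_{\Der R}(\BP,-[n])$ preserve arbitrary coproducts by the compactness of $\BP$---is closed under coproducts as well. Since $B$ is a compact generator of $\Der{B,d}$, these three properties force the essential image to coincide with $\Der{B,d}$, hence $\K=0$. I expect the only technical point that requires care to be this coproduct-preservation for $\RHom_R(\BP,-)$, but it reduces at once to the standard definition of compactness in $\Der R$, so the corollary follows.
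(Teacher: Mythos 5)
Your proposal is correct, but it takes a genuinely different route from the paper. You establish that $\BP$ is a compact generator of $\Der R$ (using the silting axiom (S2) to see $R\in\Loc\BP$) and then deduce, either via Keller's derived Morita theory in \cite{KDG} or via the abstract argument that the essential image of $\RHom_R(\BP,-)$ is a coproduct-closed triangulated subcategory of $\Der{B,d}$ containing $B$, that $\RHom_R(\BP,-)$ and $-\dotimes_B\BP$ are mutually quasi-inverse equivalences; $\K=0$ is then immediate. This proves more than the corollary asks and uses more machinery. The paper instead argues entirely from within the tools already set up: since $P^{-1}$ and $P^0$ may be taken finitely generated, applying $\RHom_R(-,\BP)$ to the canonical triangle $P^{-1}\to P^0\to\BP\to P^{-1}[1]$ yields a triangle of left $B$-modules $X\to Y\to B\to X[1]$ with $X,Y\in\add\BP$. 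Then for any $Z\in\K$, hitting this triangle with $Z\dotimes_B-$ and using $Z\dotimes_B\BP=0$ gives $Z\cong Z\dotimes_B B=0$. Your argument buys the full derived equivalence (and makes the link to derived Morita theory explicit), while the paper's is self-contained, elementary, and isolates exactly where compactness is used (finite generation of $P^{-1},P^0$ putting $\RHom_R(P^i,\BP)$ in $\add\BP$ rather than $\Add\BP$).
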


\begin{proof} 
As we have seen in Example \ref{compact-is-good}, the compact silting complex $\BP$ is good, hence we can use Theorems \ref{silting-thm-1} and \ref{1-silting-equiv}.

We apply $\RHom_{R}(-,\BP)$ to the triangle 
$P^{-1}\to P^0\to \BP\to P^{-1}[1]$, and we obtain a triangle of left $B$-modules $X\to Y\to B\to X[1]$, with $X,Y\in \add{\BP}$. If $Z\in \K$ then 
$X\dotimes_B Z=Y\dotimes_B Z=0$. It follows that $Z\cong B\dotimes_B Z=0$, and the proof is complete.  
\end{proof}

\begin{rem} The compact case was discovered in \cite[Theorem 2.15]{HKM}, where the authors proved  directly that $\Hom_{\Der{R}}(\BP,-):\CH(\tau)\rightarrow \Mod{\BE}$ 
is fully faithful. Our approach has the advantage that we are able to compute the quasi-inverse of $\Hom_{\Der{R}}(\BP,-)$.
\end{rem}


\begin{thebibliography}{99}

\bibitem{AIR} Adachi, T., O. Iyama, and I. Reiten, {\em $\tau$-tilting theory}, {Compositio Mathematica} 150
(2014), 415--452.


\bibitem{An18} L. Angeleri-H\"ugel. \textit{Silting objects}, preprint, arXiv:1809.02815 [math.RT].

\bibitem{An-ab} L. Angeleri H\"ugel, {\em On the abundance of silting modules}, Surveys in representation theory of algebras,  Contemporary Mathematics 716,
Amer. Math. Soc., Providence, RI  (2018), 1--23.

\bibitem{An-M-V} L. Angeleri H\"ugel, F. Marks, J. Vit\'oria, {\em Silting modules and ring epimorphisms},
Adv. in Math. 303 (2016), 1044-1076.


\bibitem{AMV} L. Angeleri H\"ugel, F. Marks, J. Vit\'oria, {\em Silting modules}, {Int. Math. Res. Not.}, {\bf4} (2016), 1251--1284.

\bibitem{Baz1} S. Bazzoni,
{\em Equivalences induced by infinitely generated tilting modules}, 
Proc. Am. Math. Soc. 138 (2010), 533--544.

\bibitem{BMT} S. Bazzoni, F. Mantese, A. Tonolo, {\em Derived equivalences induced by infinitely generated $n$-tilting modules}, Proc. Am. Math. Soc. 139 (2011), 4225--4234.


\bibitem{Bourb} N. Bourbaki, Elements of Mathematics, Algebra, Chapters 1--3.

\bibitem{BrZ} S. Breaz, J. \v Zemli\v cka, 
{\em The defect functor of a homomorphism and direct unions}, 
Algebr. Represent. Theory 19 (2016),  181--208.

\bibitem{BrZ2} S. Breaz, J. \v Zemli\v cka, {\em Torsion classes generated by silting modules}, Arkiv f\"or Matematik 56 (2018), 15--32. 


\bibitem{Buan} A. B. Buan, Y. Zhou,
{\em A silting theorem}, 
J. Pure Appl. Algebra 220 (2016), 2748--2770.

\bibitem{CET} R. Colpi, G. D’Este, A. Tonolo, {\em Quasi-tilting modules and counter equivalences}, J. Alg. 191
(1997), 461--494. 

\bibitem{Colby_Fuller:2004} R. R. Colby, K. R. Fuller. {Equivalence and duality for module categories. With tilting and cotilting for rings}, Cambridge University Press, Cambridge, 2004.


\bibitem{CMT} R. Colpi, F. Mantese, A. Tonolo, {\em When the heart of a faithful torsion pair is a module category},
J. Pure Appl. Algebra 215 (2011), 2923--2936.

\bibitem{CT} R. Colpi, J. Trlifaj,  {\em Tilting modules and tilting torsion theories},
J. Algebra 178 (1995), 614--634.


\bibitem{fac1} A. Facchini, {\em Divisible modules over integral domains}, Ark. Mat. 26 (1988), 67--85.

\bibitem{fac2} A. Facchini, {\em A tilting module over commutative integral domains}, Comm. Alg. 15 (1987),
2235--2250.


\bibitem{HKM} M. Hoshino, Y. Kato, J.-I. Miyachi, {\em On t-structures and torsion theories induced by compact objects}, J. Pure Appl. Algebra 167 (2002), 15--35.

\bibitem{HRS} D. Happel, I. Reiten, S. Smalo, Tilting in Abelian Categories and Quasitilted Algebras, Mem. Amer. Math. Soc., 
575, 1996. 

\bibitem{IJY} O. Iyama, P. J\o rgensen, D. Yang, 
{\em Intermediate co-t-structures, two-term silting objects, $\tau$-tilting modules, and torsion classes}, 
Algebra Number Theory 8 (2014), 2413--2431.

\bibitem{Jasso} G. Jasso, {\em Reduction of $\tau$-tilting modules and torsion pairs}, {Int. Math.
Res. Not.} 16 (2014),  7190--7237.

\bibitem{KY} M. Kalck, D. Yang, {\em Relative singularity categories I: Auslander resolutions}, Adv. in Math.
 301 (2016), 973--1021.

\bibitem{keller-functori} B. Keller, {\em On the construction of triangle equivalences}, in 
\textit{Derived equivalences for group rings}, ed. S. K\" onig, and A. Zimmermann, Lecture Notes in Mathematics, 1685 (1998), 155--176.

\bibitem{KDG} B. Keller, {\em Deriving DG categories}, Ann. Sci. Ec. Norm. Sup., 27 (1994), 63--102.

\bibitem{KDU} B. Keller, {\em Derived Categories and their use}, Handbook of Algebra. Vol. 1. Edited by M. Hazewinkel, North-Holland Publishing Co., Amsterdam, 1996, 671--701.




\bibitem{MT} F. Mantese, A. Tonolo, 
{\em On the heart associated with a torsion pair}, 
Topology Appl. 159 (2012), 2483--2489.

\bibitem{Ma-Sto:2015} F.  Marks, F., J. \v S\v tov\'\i\v cek, {\em Torsion classes, wide subcategories and localisations}, Bull. Lond. Math. Soc. DOI: 10.1112/blms.12033.

\bibitem{N} A. Neeman, {\em Triangulated Categories},  Annals of Mathematics Studies 148, Princeton University Press, 2001.

\bibitem{Ni-Sa} P. Nicol\' as, M. Saor\'\i n, {\em Generalized Tilting Theory},  Appl Categor Struct. 26 (2018), 309--368.

\bibitem{PS1} C. E. Parra, M. Saor\'\i n, {\em On hearts which are module categories}, 
J. Math. Soc. Japan 68 (2016), 1421--1460.

\bibitem{PS} 
C. E. Parra, M. Saor\'\i n, 
{\em Direct limits in the heart of a t-structure: the case of a torsion pair}, 
J. Pure Appl. Algebra 219 (2015), 4117--4143. 

\bibitem{Ps-Vi} C. Psaroudakis, J. Vit\' oria, {\em Realisation functors in tilting theory}, Math. Z. 288 (2018), 965--1028.

\bibitem{Tre} H. Treffinger, {\em $\tau$-Tilting Theory and $\tau$-Slices}, arXiv:1607.02473v1 [math.RT].

\bibitem{Wei-semi} J. Wei, {\em Semi-tilting complexes},  Isr. J. Math. 194 (2013),  871--893. 

\bibitem{Yek} A. Yekutieli, Derived Categories,  arXiv:1610.09640v3 [math.CT].

\bibitem{dga} ***, {\em Differential Graded Algebras}, Stacks Project,\\ \texttt{http://stacks.math.columbia.edu/download/dga.pdf}.

\end{thebibliography}
\end{document}